\newtheorem{thm}{Theorem}
\newtheorem{lem}{Lemma}
\newtheorem{rem}{Remark}
\newtheorem{prop}{Proposition}
\newtheorem*{defin}{Definition}
\newtheorem*{conj}{Conjecture}
\newcommand{\rd}{\partial}
\newcommand{\re}{\mathbb{R}}
\newcommand{\rp}{\mathbb{R}^{2}}
\newcommand{\hp}{\mathbb{H}^{2}}
\newcommand{\rt}{\mathbb{R}^{3}}
\newcommand{\sd}{\mathbb{S}^{2}}
\newcommand{\st}{\mathbb{S}^{3}}
\newcommand{\z}{\mathbb{Z}}
\newcommand{\si}{\Sigma}
\newcommand{\ric}{\operatorname{Ric}}
\newcommand{\nil}{\operatorname{Nil_3}}
\newcommand{\psl}{\operatorname{\widetilde{SL}_2}}
\title{On a conjecture of Meeks, Pérez and Ros}
\author{Vanderson Lima}
\address{Universidade Federal do Rio Grande do Sul\\
  Instituto de Matem\'atica e Estat\'istica\\
 Porto Alegre, RS - 91509-900, Brazil}
\email{vanderson.lima@ufrgs.br}
\begin{document}

\maketitle

\begin{abstract}

Meeks, Pérez and Ros conjectured that a closed Riemannian $3$-manifold which does not admit any closed embedded minimal surface whose two-sided covering is stable must be diffeomorphic to a quotient of the $3$-sphere. We give a counterexample to this conjecture. Also, we show that if we consider immersed surfaces instead of embedded ones, then the corresponding statement is true.

\end{abstract}

\providecommand{\abs}[1]{\lvert#1\rvert}

\linespread{1} 

\section{Introduction}

A closed minimal surface $\si$ immersed in a Riemannian $3$-manifold $(M,g)$ is called stable if the second variation of area is non-negative for all smooth variations of $\si$. Existence of stable minimal surfaces can be obtained when the ambient space presents nontrivial topology; for instance, one can minimize area in the isotopy class of an incompressible surface (see \cite{MSY}). On the other hand, quotients of the $3$-sphere endowed with metrics of positive Ricci curvature do not admit two-sided, closed, stable, minimal surfaces. These facts suggest that the existence of closed stable minimal surfaces is related to the topology of the ambient space. In \cite{M.P.R}, Meeks, P\'erez and Ros stated the following conjecture.

\begin{conj}[Meeks, P\'erez, Ros]
Let $(M,g)$ be a closed Riemannian $3$-manifold. If $(M,g)$ does not admit any closed, embedded, minimal surface whose two-sided covering is stable, then $M$ is finitely covered by the $3$-sphere.
\end{conj}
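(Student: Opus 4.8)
The plan is to establish the contrapositive: assuming that $M$ is not finitely covered by $\st$, I will produce a closed embedded minimal surface $\si\subset M$ whose two-sided covering $\widehat{\si}$ is stable. By the elliptization theorem together with the Poincar\'e conjecture, a closed $3$-manifold is finitely covered by $\st$ exactly when its fundamental group is finite; hence it suffices to treat the case $\pi_1(M)$ infinite. The guiding principle is that topological largeness should force an essential surface, and that area minimization in an essential class yields a minimizer which is automatically stable; the task is then to locate a class in which the minimizer is embedded, closed, and stable in the two-sided sense.

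I would first dispose of the non-aspherical case. If $\pi_2(M)\neq 0$, equivalently if $M$ is not irreducible, then by the theorem of Meeks--Simon--Yau one minimizes area in a nontrivial element of $\pi_2(M)$ to obtain either an embedded least-area $2$-sphere or an embedded $\mathbb{RP}^2$ whose two-sided cover is a least-area $2$-sphere; in both cases the surface is stable and we are done. Thus I may assume $M$ is irreducible, and since $\pi_1(M)$ is infinite the sphere theorem makes $M$ aspherical.

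Next I use integral homology. If $H_2(M;\z)\neq 0$ --- for orientable $M$ this means $b_1(M)\ge 1$ --- I minimize area in a nonzero class of $H_2(M;\z)$. By Federer--Fleming compactness and the interior regularity theory for area-minimizing integral $2$-currents in a $3$-manifold, the minimizer is a smooth closed embedded surface; it is two-sided because it carries a nonzero integral class, and it is stable because it minimizes area. Hence $\widehat{\si}=\si$ is stable and the required surface exists. What remains after this step is precisely the class of rational homology spheres with infinite fundamental group.

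The last case is where I expect the real difficulty. For a rational homology sphere no integral class is available. If $H_1(M;\z)$ carries $2$-torsion one could minimize in a nonzero class of $H_2(M;\z_2)$, but the resulting surface is one-sided, and minimizing mod $2$ controls only those variations that are odd under the deck involution of $\widehat{\si}$, so it does not by itself certify that $\widehat{\si}$ is stable --- this is exactly the subtlety that the phrase ``two-sided covering is stable'' is meant to capture. The remaining topological input is $\pi_1(M)$ alone. If $M$ is Haken, containing a two-sided incompressible surface $\si_0$, then by the work of Schoen--Yau, Freedman--Hass--Scott and Meeks--Simon--Yau one minimizes area in the isotopy class of $\si_0$; incompressibility prevents the minimizing sequence from degenerating, and the limit is an embedded two-sided stable minimal surface. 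The genuine obstacle is therefore the \emph{non-Haken} rational homology spheres with infinite $\pi_1$, such as many closed hyperbolic manifolds: these contain no closed embedded essential surface at all, so there is no class in which to minimize, while the essential surfaces guaranteed by $\pi_1(M)$ --- for hyperbolic $M$, the $\pi_1$-injective surfaces of Kahn--Markovic --- are only immersed. Producing from such data an \emph{embedded} closed minimal surface whose two-sided cover is stable, or else ruling one out, is the crux on which the conjecture turns.
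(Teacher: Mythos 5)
Your proposed proof cannot be completed, because the conjecture is false: the paper's whole point is to refute it, so there is no proof of this statement to compare against. Your reduction is essentially sound as far as it goes, and in fact it closely parallels the paper's proof of its Theorem~\ref{ims}, which is the \emph{true} statement one gets by replacing ``embedded'' with ``immersed'' (non-irreducible case via Sacks--Uhlenbeck/Meeks--Yau, Haken case via Schoen--Yau/Freedman--Hass--Scott, hyperbolic non-Haken case via Kahn--Markovic). But the residual case you flag as ``the crux'' is not merely open --- part of it fails outright. A closed, irreducible, non-Haken Seifert fibered space $M$ with infinite $\pi_1$ (necessarily $e(M)\neq 0$, base a sphere with three cone points, geometry $\nil$ or $\psl$) has $\z\oplus\z\subset\pi_1(M)$, hence carries an immersed stable minimal torus; yet the paper proves (Theorem~\ref{coex}) that for the geometric metric there is \emph{no} closed embedded minimal surface with stable two-sided cover: by the Pitts--Rubinstein lemma any such surface must be vertical or horizontal, horizontal surfaces are excluded because $e(M)\neq 0$, and a vertical one would project to a simple closed geodesic in the regular part of the thrice-cone-pointed sphere orbifold, which a Gauss--Bonnet argument rules out. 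Since these manifolds have infinite fundamental group, they are not finitely covered by $\st$, and the conjecture is refuted.

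Two remarks on your framing. First, you correctly sensed that embeddedness is where everything breaks, and your observation about mod-$2$ minimization (that it only controls variations compatible with the deck involution, so it does not certify stability of the two-sided cover) is exactly the right subtlety. Second, you misidentified which subcase is fatal: you singled out hyperbolic non-Haken manifolds, but that case is precisely the \emph{Question} the paper leaves open --- it is the Seifert fibered non-Haken manifolds, sitting inside your ``$\pi_1$ contains $\z\oplus\z$'' territory, where the conjecture dies. Had you split the non-Haken case along the Seifert Fibered Space theorem (as the paper does in Section~\ref{last}), the $\z\oplus\z$ branch would have pointed you at quotients of $\nil$ and $\psl$, where the rigidity of geometric metrics (stable implies vertical or horizontal) turns the search for an embedded stable surface into the nonexistence statement above rather than an existence problem.
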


\begin{rem}
When the surface is one-sided, we consider the two-sided immersed surface associated to it, otherwise we consider the surface itself. 
\end{rem}

In this work we present a family of counter-examples to this conjecture. The examples are locally homogeneous Riemannian manifolds modelled on the {\it Thurston geometries} $\nil$ and $\psl(\re)$. The construction uses a topological characterization of these spaces and the classification of its stable minimal surfaces (due to Pitts-Rubinstein \cite{P.R}). We next state our first main result.

\begin{thm}\label{cx}
Let $(\widetilde{M},\widetilde{h})$ be $\nil$ or $\psl(\re)$ endowed with a homogeneous metric $\widetilde{h}$ whose isometry group has dimension $4$. There are closed Riemannian $3$-manifolds $(M,h)$ obtained as quotients of $(\widetilde{M},\widetilde{h})$ and which do not contain any closed, embedded, minimal surface whose two-sided cover is stable.
\end{thm}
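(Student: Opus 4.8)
The plan is to realize the desired quotients as \emph{small} Seifert fibered spaces and to exploit the fact that such manifolds contain no essential surface whatsoever. I would begin from the topological characterization of the geometries $\nil$ and $\psl(\re)$: a closed manifold carrying one of these homogeneous structures (with $4$-dimensional isometry group) admits a Seifert fibration $M\to O$ over a closed $2$-orbifold $O$ with nonzero Euler number $e(M)\neq 0$, the base being Euclidean when $\tilde{M}=\nil$ and hyperbolic when $\tilde{M}=\psl(\re)$, and the fibration realized as a Riemannian submersion with geodesic fibers. I would then take $O$ to be a triangle orbifold $S^2(p,q,r)$, with $\frac1p+\frac1q+\frac1r=1$ in the $\nil$ case and $\frac1p+\frac1q+\frac1r<1$ in the $\psl(\re)$ case, and choose Seifert data giving a quotient $M=\tilde{M}/\Gamma$ with $e(M)\neq 0$. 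Cocompact lattices $\Gamma$ producing such $M$ exist, so these are genuine closed quotients of $(\tilde{M},\tilde{h})$.

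Next I would reduce the problem to incompressible surfaces. Since $\tilde{M}$ is diffeomorphic to $\rt$, the manifold $M$ is aspherical, so $\pi_2(M)=0$; combined with the classification of stable minimal surfaces in these geometries due to Pitts--Rubinstein, this forces every two-sided closed stable minimal surface to be incompressible --- concretely, a vertical torus or Klein bottle lying over a closed geodesic of the base $O$. The one-sided case is handled through the associated two-sided cover: if $\si$ is an embedded one-sided minimal surface whose two-sided cover is stable, then that cover is again, by the same classification, an incompressible (vertical) minimal surface, which forces $\si$ itself to be incompressible and vertical.

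I would then show that $M$ contains no incompressible surface at all. By Waldhausen's description of incompressible surfaces in Seifert fibered spaces, any such surface is isotopic either to a horizontal surface, transverse to all fibers, or to a vertical surface, lying over an essential simple closed curve of the base orbifold $O$. A horizontal surface can exist only when $e(M)=0$, which our choice excludes. A vertical surface requires an essential simple closed curve in $O=S^2(p,q,r)$; but on a triangle orbifold every embedded curve either bounds a disk or is isotopic to a loop around a single cone point, so no essential curve exists. Hence $M$ has no incompressible surface, and by the previous paragraph no closed embedded minimal surface of $M$ has stable two-sided cover.

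Finally, since $M$ is aspherical with infinite fundamental group it is not finitely covered by $\st$, so each such $M$ is a counterexample to the conjecture, which proves the theorem. The step I expect to be the main obstacle is the reduction carried out in the second paragraph: one must invoke the Pitts--Rubinstein classification precisely enough to guarantee that \emph{every} stable minimal surface (two-sided, or arising as the a priori only immersed two-sided cover of a one-sided one) is incompressible and vertical, so that nothing escapes the topological vanishing argument --- in particular ruling out compressible stable spheres or tori. Once this is secured, the remaining steps are purely topological and routine.
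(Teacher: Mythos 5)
Your construction of the examples and your first two reductions (Pitts--Rubinstein forces a two-sided stable minimal surface to be vertical or horizontal; $e(M)\neq 0$ kills the horizontal case) coincide with the paper's. The genuine gap is in your treatment of the vertical case, where you substitute a topological statement that is too weak for the geometric one that is actually needed. You reduce to ``a small Seifert fibered space over $S^2(p_1,p_2,p_3)$ contains no incompressible surface.'' But a vertical stable minimal surface need not be incompressible: its projection is a simple closed geodesic $\gamma$ in the regular part of the base orbifold, and if $\gamma$ encircles a single cone point, the vertical torus above it bounds a fibered solid torus (the saturated neighbourhood of the singular fibre) and is therefore compressible. Waldhausen's classification of essential surfaces says nothing about such a torus, so the absence of incompressible surfaces does not exclude it; your assertion that Pitts--Rubinstein plus asphericity makes every stable minimal surface incompressible is precisely the unproved step. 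You flag this yourself as the main obstacle, but the fix is not a sharper reading of Pitts--Rubinstein --- it is an additional geometric fact about the base.

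That fact, which the paper isolates as a separate proposition, is that a flat or hyperbolic sphere with three cone points admits \emph{no} simple closed geodesic in its regular part, essential or not. The proof is by Gauss--Bonnet: such a curve separates the sphere, and either it bounds a disk inside the regular part (impossible since the curvature is $\leq 0$), or it is freely homotopic to the boundary circle of a neighbourhood of one cone point, and the annulus between the two curves violates Gauss--Bonnet because that boundary circle has negative geodesic curvature with respect to the normal pointing into the annulus. With this proposition in hand your argument closes; without it, a compressible vertical minimal torus over a geodesic around one cone point escapes your topological vanishing. (Your handling of the one-sided case is also loose --- the paper passes to a double cover of $M$ in which the lift is embedded, two-sided and stable, and which is again Seifert fibered with a geometric metric --- but that is easily repaired once the vertical case is settled correctly.)
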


We prove Theorem 1 in Section 3, using a family of examples presented in Section \ref{mainexample}.

One can wonder about what is the correct statement to the conjecture of Meeks, P\'erez and Ros. In this direction, we prove the following.

\begin{thm}\label{ims}
Let $(M,g)$ be a closed Riemannian $3$-manifold. If $(M,g)$ does not admit any closed, two-sided, immersed, stable minimal surface, then $M$ is finitely covered by the $3$-sphere.
\end{thm}

The proof of Theorem 2 uses results of existence of area-minimizing surfaces and some of the developments in the theory of $3$-manifolds of the last years, in particular the work of Perelman on the {\it Geometrization conjecture} \cite{Pe}, and the work of Kahn-Markovic on the {Surface subgroup conjecture} \cite{K.M}. 

In relation to the existence of the closed geometric 3-manifolds in Theorem \ref{cx} that provide
counterexamples to the Meeks, Perez Ros conjecture, we ask the following question, which we
highlight is not even known in the special case that the metric g is hyperbolic; see the end of
Section 4 for further discussion and motivation for this question.\\

\noindent
{\bf Question}: Let $M$ be a closed, non-Haken hyperbolic $3$-manifold and let $g$ be an arbitrary Riemannian metric on $M$. Does $(M,g)$ admit a closed, embedded, minimal surface whose two-sided covering is stable?\\

\noindent
{\bf Acknowledgments}: It is a pleasure to thank Lucas Ambrozio and Harold Rosenberg for discussions and suggestions on the manuscript. I also thank Laurent Mazet for a discussion about the main results, and Joaquin P\'erez for personal correspondence about this work. Finally, I would like to thank the anonymous referees by suggestions ans corrections.

\section{Seifert Fibered Spaces}\label{SFS}

In this section, we recall some facts about Seifert Fibered Spaces. The main references used here are \cite{Ma,S}. Let $D$ denote the unit disc of the complex plane centered at the origin. Let $p,q$ be two coprime integers with $p>0$ and $0 \leq q \leq p/2$. A \emph{standard fibered solid torus} with coefficients $(p,q)$ is an open solid torus
$$\frac{D \times [0,1]}{\bigl\{(z,0) \sim (\psi_{p,q}(z),1)\bigr\} }$$
where $\psi_{p,q}\colon D  \to D $ is given by $\psi_{p,q}(z) = e^{(2\pi iq/p)}z$. The fibration by \emph{vertical} segments $\{x\} \times [0,1]$ extends to a fibration by circles of the solid torus. The central fiber obtained by identifying the endpoints of $\{0\} \times [0,1]$ is the {\it core} of the solid torus, and every non-central fiber winds $p$ times around the core of $M$. The positive number $p$ is the \emph{multiplicity} of the central fiber. If $p=1$ the fibered solid torus is diffeomorphic to the usual product $D\times S^1$ and the central fiber is called \emph{regular}. If $p > 1$, the central fiber is called \emph{singular}.

A \emph{Seifert Fibered Space} is a 3-manifold $M$ foliated by circles, such that every circle has a fibered neighbourhood diffeomorphic to a standard fibered solid torus. Let $S$ be the topological space obtained from $M$ by quotienting the circles to points. Then, $S$ is a connected surface and has a natural orbifold structure: if the preimage of $x\in S$ is a fiber of multiplicity $p$, we see $x$ as a cone point of angle $\frac{2\pi}{p}$. The quotient map $\Pi: M \to S$ is called the Seifert fibration (one should think of the total space of this fibration as a circle bundle over the orbifold $S$). A Seifert fibration without singular fibers is a circle bundle over a surface, in the usual sense.

Let $\si \subset M$ be a closed, embedded surface in a Seifert Fibered space $M$. We say $\si$ is \emph{vertical} if it is the union of regular fibers. In this case we have two possibilities: $\si$ is either a torus or a Klein bottle, and in both cases the projection $\Pi(\si)$ to the base surface is a simple closed curve contained in the complement of the cone points of $S$. We also say that $\si$ is \emph{horizontal} if it is everywhere transverse to the fibers. On a closed Seifert manifold $M$ there is an invariant associated to it, called the Euler number and denoted by $e(M)$, which detects the presence of horizontal surfaces: $M$ contains a \emph{horizontal} surface if, and only if, $e(M) = 0$, see \cite{Ma}[Section 10.4.2].

\subsection{Geometric metrics}

Let $\mathbb{E}$ denote one of the fibered {\it Thurston geometries}, i.e., $\rt$, $\st$, $\sd\times\re$, $\hp\times\re$, $\nil$, $\psl(\re)$ (we will denote $\psl(\re)$ by $\psl$). Each of these spaces admits a homogeneous metric with isometry group of dimension at least $4$ and these metrics satisfy the following properties: there is a Riemannian submersion 
$\Pi^{\prime}: \mathbb{E} \rightarrow \mathbb{M}^{2}(\kappa),$ 
where $\mathbb{M}^{2}(\kappa)$ is the complete, simply connected Riemannian surface with constant curvature $\kappa$, and the fibers of $\Pi^{\prime}$ are the integral curves of a unit Killing vector field $\xi$. We have $\kappa = 0$ in the case of $\rt$ and $\nil$, $\kappa = 1$ in the case of $\sd\times\re$ and $\st$, and $\kappa = -1$ in the case of $\hp\times\re$ and $\psl$. Moreover each geometry has the structure of a line or circle bundle over $\mathbb{M}^{2}(\kappa)$, such that the bundle projection coincides with the submersion $\Pi^{\prime}$, and all the isometries of these metrics preserve this bundle structure.

Every Seifert Fibered Space $M$ is diffeomorphic to a quotient of some $\mathbb{E}$ as above, and conversely, every manifold which is a quotient of some $\mathbb{E}$ has a Seifert Fibered structure. Hence, we see that each Seifert fibered space has a locally homogeneous metric, and its Seifert fibration is induced by the bundle structure on the universal cover and the projection $\Pi^{\prime}$. Thus, away from the singular fibers, we have an $SO(2)$-action on the fibers by isometries and the projection $\Pi: M\to S$ is a Riemannian submersion, where the base orbifold is endowed with a constant curvature metric with isolated cone singularities. We call such a metric a {\it geometric metric}.

\begin{rem}
Two closed $3$-manifolds with different Thurston geometries are not diffeomorphic (see ~\cite[Theorem~5.2]{S}). This fact will be crucial to our counterexample in the next Section. 
\end{rem}

Let $M$ be a closed Seifert Fibered Space with base orbifold $S$. The underlying geometry for $M$ is determined in terms of its Euler number $e(M)$ and the orbifold Euler characteristic of $S$ (denoted $\chi(S)$), as given by the following table.

\vspace{0.2cm}

\begin{table}[!h]
\centering
\begin{tabular}[c]{lcccccr}
\hline
 & & $\chi < 0$ & & $\chi = 0$ & & $\chi > 0$ \\\\
$e = 0$ & & $\hp\times\re$ & & $\rt$ & &$\sd\times\re$ \\\\
$e \neq 0$ & & $\psl$ & & $\nil$ & & $\st$ \\
\hline
\label{The geometry of closed Seifert manifolds}
\end{tabular}
\end{table}

\subsection{Main example}\label{mainexample}

Let $p_1, \ p_2, \ p_3 \geqslant 2$ be natural numbers and let $\Delta$ be a geodesic triangle either in $\rp$ or in $\hp$, with inner angles $\frac{\pi}{p_1}$, $\frac{\pi}{p_2}$, $\frac{\pi}{p_3}$. By reflecting iteratively $\Delta$ along its sides we get a tessellation $T$. The \emph{triangle group} $\Gamma(p_1,p_2,p_3)$ is the group of isometries of $\rp$ or of $\hp$, generated by reflections along the three sides of $\Delta$. This group acts freely and transitively on the triangles of the tessellation $T$, hence it is discrete and $\Delta$ is a fundamental domain for $\Gamma(p_1,p_2,p_3)$. Consider the index-two subgroup $\Gamma = \Gamma^{\rm or}(p_1,p_2,p_3) \triangleleft \Gamma(p_1,p_2,p_3)$ of orientation-preserving isometries. Taking the quotient of the model space by $\Gamma$ we obtain an orbifold $(S,g)$, which is a sphere with exactly $3$ cone points of orders $p_1$, $p_2$ and $p_3$. $(S,g)$ is hyperbolic, or flat, according to whether $\frac{1}{p_1} + \frac{1}{p_2} + \frac{1}{p_3}$ is smaller than $1$, or equal to $1$, respectively.

Assume that $\frac{1}{p_1} + \frac{1}{p_2} + \frac{1}{p_3} < 1$. Recall that the geometry of $\psl$ can be constructed in the following way: consider the unit tangent bundle $U\hp$ of $\hp$ endowed with the {\it Sasaki metric}, and then take the universal cover of $U\hp$ endowed with the pullback metric. 

For each isometry $f$ of $\hp$, the differential $f_{*}$ is an isometry of $U\hp$, and therefore using the covering $\psl \to U\hp$, $f_{*}$ lifts to an isometry of $\psl$. So, there is a group of isometries of $\psl$, denoted by $\widetilde{\Gamma}$, which is an extension of $\Gamma$ by the central $\z$ subgroup of $\mbox{\em Isom}(\psl)$: we have the exact sequence
$$0 \to \z \to \widetilde{\Gamma} \to \Gamma \to 0.$$ 
Note that the action of $\widetilde{\Gamma}$ has no fixed points. In order to prove this, it suffices to show that for any $f\in \Gamma, f_*$ has no fixed points, unless $f$ is the identity. But the construction of $\Gamma$ gives that either $f$ is conjugate to a rotation or to a hyperbolic translation, from where it follows the claim. 

Let $M = \psl/\widetilde \Gamma$. Then, it is easy to prove that $M$ is a closed Seifert Fibered Space projecting on $(S,g)$. Moreover, $e(M)\neq 0$ and $M$ has 3 singular fibers whose respective fibered solid tori have coefficients $(p_i,1)$, $i=1,2,3$. Geometrically we can see $M$ as the unit tangent bundle of the orbifold $(S,g)$. In general, it is possible to construct other circle bundles over $(S,g)$ where $e \neq 0$ and at least one of the singular fibers has coefficients $(p_i,q_i)$ satisfying $q_i \neq 1$. Similar constructions in the case $\frac{1}{p_1} + \frac{1}{p_2} + \frac{1}{p_3} = 1$ produces quotients of $\nil$. For more details on these examples see \cite{S}.

\section{The counterexample}\label{main}

The next two results together with the main Example \ref{mainexample} are the keys steps in the construction of our counterexample in Theorem \ref{coex}.

\begin{lem}[Pitts, Rubinstein, \cite{P.R}]\label{PR}
Let $\si$ be a closed two-sided embedded stable minimal surface in a Seifert Fibered Space $M$ endowed with a geometric metric $h$. Then $\si$ is vertical or horizontal.
\end{lem}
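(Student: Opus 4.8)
The plan is to exploit the unit Killing vector field $\xi$ that generates the fibres. On the universal cover $\mathbb{E}$ the metric is geometric, so $\xi$ is a globally defined unit Killing field tangent to the fibres of $\Pi$; since the deck group acts by isometries preserving the bundle structure (and the orientation of the fibres), $\xi$ descends to a smooth unit Killing field on all of $M$, including over the singular fibres. Given the closed two-sided minimal surface $\si$ with a globally defined unit normal $N$ (this is precisely where two-sidedness is used), I would study the \emph{angle function} $u := \langle \xi, N\rangle \in C^\infty(\si)$, which measures the failure of $\xi$ to be tangent to $\si$.

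The first key step is that $u$ is a Jacobi field. Because $\xi$ is Killing, its flow consists of isometries, so it carries $\si$ to a one-parameter family of minimal surfaces; differentiating the identically vanishing mean curvature of this family shows that the normal component $u$ of the variation field lies in the kernel of the stability (Jacobi) operator, i.e.
\[
L u := \Delta_\si u + \bigl(|A|^2 + \ric(N,N)\bigr)u = 0,
\]
where $A$ is the second fundamental form of $\si$. Thus $u$ is a $0$-eigenfunction of $L$.

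The second step is the eigenvalue dichotomy forced by stability. Stability of $\si$ means that the lowest eigenvalue $\lambda_1$ of $-L$ is non-negative. If $u \equiv 0$, then $\xi$ is everywhere tangent to $\si$, so $\si$ is saturated by integral curves of $\xi$, i.e. by fibres, and hence is vertical. If instead $u \not\equiv 0$, then the relation $Lu = 0$ exhibits $0$ as an eigenvalue of $-L$, forcing $\lambda_1 \le 0$; combined with stability this gives $\lambda_1 = 0$ and shows that $u$ is a \emph{first} eigenfunction. Since a first eigenfunction of a Schrödinger operator on a closed connected surface has a definite sign and, by the strong maximum principle, never vanishes, $u$ is nowhere zero. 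Then $\xi$ is nowhere tangent to $\si$, so $\si$ is everywhere transverse to the fibres, i.e. horizontal. (If $\si$ is disconnected, the same argument applies to each component.)

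I expect the main subtlety to lie at the two ends of the argument, where the analytic statements meet the topological notions. First, one must check that $\xi$ genuinely descends to a \emph{smooth} unit Killing field across the singular fibres, so that $u$ and the equation $Lu=0$ make sense globally on $M$; this rests on the fact that every isometry of the geometric metric preserves the bundle structure. Second, in the case $u\equiv 0$ one should verify that the resulting saturated surface is a union of \emph{regular} fibres, ruling out that a minimal vertical surface could contain a singular fibre. By contrast, the core analytic input---that the normal part of a Killing field is a Jacobi field, together with the first-eigenfunction sign argument---is standard and robust.
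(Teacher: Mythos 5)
Your core argument---the angle function $u=\langle\xi,N\rangle$ is a Jacobi field because $\xi$ is Killing, and stability forces $u$ to be identically zero or nowhere zero---is exactly the argument in the paper. The one substantive difference is how the global Killing field on $M$ is obtained, and this is where your version has a gap: you assert that $\xi$ descends to all of $M$, parenthetically assuming that the deck group preserves the orientation of the fibres. That assumption is not automatic. For Seifert fibrations whose fibre orientation is reversed along some loops (e.g.\ over a non-orientable base orbifold), $\xi$ is only defined up to sign, so $u$ is merely a section of a flat real line bundle over $\si$ rather than a function, and the first-eigenfunction/strong-maximum-principle step does not apply as stated. The paper disposes of this issue---and of both subtleties you flag at the end (smoothness of $\xi$ across singular fibres, and vertical surfaces containing singular fibres)---in one stroke: since two-sidedness, stability, and the local properties of being vertical or everywhere transverse to the fibres all lift to coverings, it first passes to a finite cover of $M$ that is an honest $S^1$-bundle over a closed surface carrying a genuine isometric $SO(2)$-action, and runs the angle-function argument there. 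You should add that reduction (or at least a double cover trivializing the fibre orientation) to close the gap; the analytic heart of your proof then goes through unchanged.
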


\begin{proof}
If $M$ is a Lens space, then the metric $h$ has positive Ricci curvature, so in this case there is no closed, two-sided, stable minimal surface.
So, suppose $M$ is not a Lens space, and let $\Pi: M \to S$ be the Seifert Fibration. In this case, there exist a finite-degree covering $F:\widetilde{M} \to M$, an orbifold covering $f:\widetilde S \to S$ and a Seifert Fibration $\widetilde\Pi:\widetilde{M} \to \widetilde S$, such that $\widetilde S$ is a smooth surface, $\widetilde{M}$ is a $\mathbb{S}^1$-bundle over $\widetilde{S}$ and $\Pi\circ F = f\circ\widetilde\Pi$ (see ~\cite[Section~10.3.7]{Ma}). Also, $\widetilde h = F^{*}h$ is a geometric metric, and $\widetilde S$ and $S$ are endowed with respective metrics $\widetilde g$ and $g$ of (the same) constant curvature.

Consider a lift $\widetilde{\si}$ of $\si$ to $\widetilde{M}$. So, $\widetilde{\si}$ is a closed, two-sided, embedded, stable minimal surface in $(\widetilde M,\widetilde h)$. Let $\xi$ be the unit Killing vector field associated to $\widetilde\Pi$ and let $N$ be a unit normal to $\widetilde\si$. Denote $\phi = \langle N,\xi\rangle$. Suppose $\widetilde\si$ is neither horizontal nor a union of fibers. Then $\phi$ is not identically zero, but $\phi(x) = 0$ for some $x \in \widetilde\si$. Since $\xi$ is a Killing vector field we have 
\begin{equation}\label{jacobi}
L(\phi) = 0,
\end{equation}
where $L$ is the Jacobi operator of $\widetilde\si$. Since $\widetilde\si$ is stable and equation \eqref{jacobi} holds, by standard elliptic theory we have that $\phi$ does not vanish anywhere, or it is identically zero, which contradicts what we established before. Thus, $\widetilde\si$ is horizontal or it is a union of fibers. Since these two properties are local, the same conclusion holds for $\si$. 

Let $\si$ be a union of fibers, and suppose it contains a singular fiber. Thus $\gamma = \Pi(\si)$ is a simple closed curve containing a cone point $x \in S$ of angle $2\pi/p$, for some integer $p > 1$. Consider a neighbourhood $D$ of $x$ such that $f^{-1}(D)$ is a collection of disjoint geodesic disks $D_1,\cdots,D_m$ of $(\widetilde S,\widetilde g)$. Each $D_i$ is decomposed into $2p$ circular sectors of angle $\pi/p$, and $\widetilde\gamma = f^{-1}(\gamma\cap D)$ can be described as follows: on each sector there are two arcs meeting (uniquely) at the center of $D_i$. Hence, $\widetilde\Pi^{-1}(\widetilde\gamma)$ is not embedded. Since $\Pi\circ F = f\circ\widetilde\Pi$, we conclude that $\si$ is not embedded, which is a contradiction. Therefore, $\si$ cannot contain a singular fiber.
\end{proof}

\begin{prop}\label{geod}
Let $(S,g)$ be a two-sphere endowed with a flat or hyperbolic metric $g$ with three cone points. Then, $(S,g)$ does not contain a simple closed geodesic inside its regular part.
\end{prop}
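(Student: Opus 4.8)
The plan is to argue by contradiction using the Gauss--Bonnet formula for surfaces with cone singularities. Suppose $\gamma \subset S$ is a simple closed geodesic contained in the regular part. Since $S$ is topologically a $2$-sphere, the Jordan curve theorem shows that $\gamma$ separates $S$ into two regions $\Omega_1,\Omega_2$, each with closure a topological disk bounded by $\gamma$. Because $\gamma$ avoids the three cone points, each cone point lies in the interior of exactly one of the $\Omega_i$, so the three cone points are partitioned between the two disks.

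First I would record the Gauss--Bonnet formula for a compact region $\Omega$ with piecewise-smooth boundary on such a cone surface: if the cone points $x_j\in\Omega$ have cone angles $\theta_j=\tfrac{2\pi}{p_j}$, then
\[
\int_{\Omega} K\,dA \;+\; \int_{\partial\Omega} k_g\,ds \;+\; \sum_{x_j\in\Omega}\bigl(2\pi-\theta_j\bigr) \;=\; 2\pi\,\chi(\Omega).
\]
The sign convention is pinned down by testing it on the closed surface $S$ itself: with $\chi(S)=2$ and $K\equiv 0$ it reduces to $\sum_j(2\pi-\theta_j)=4\pi$, i.e. $\sum_j\tfrac{1}{p_j}=1$, which is exactly the flat condition from the construction. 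In our situation $\partial\Omega_i=\gamma$ is a geodesic, so $k_g\equiv 0$, and $\chi(\Omega_i)=1$ since $\Omega_i$ is a disk.

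The key step is then a counting observation. The three cone points split as $3=0+3=1+2$ between $\Omega_1$ and $\Omega_2$, so in every case at least one of the two disks --- say $\Omega_1$ --- contains at most one cone point. Applying the formula above to $\Omega_1$: if $\Omega_1$ contains no cone point we get $\int_{\Omega_1}K\,dA=2\pi$, and if it contains a single cone point of angle $\theta=\tfrac{2\pi}{p}$ we get $\int_{\Omega_1}K\,dA=\theta=\tfrac{2\pi}{p}>0$. In either case $\int_{\Omega_1}K\,dA>0$. But $g$ is flat or hyperbolic, so $K\le 0$ on the regular part and hence $\int_{\Omega_1}K\,dA\le 0$, a contradiction. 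This rules out the existence of $\gamma$.

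The only delicate point is the bookkeeping of the cone-angle terms and the realization that one must apply Gauss--Bonnet to the \emph{smaller} side: a disk enclosing two cone points can be consistent with $K\le 0$ (indeed $\int K = 2\pi(\tfrac1{p_1}+\tfrac1{p_2}-1)\le 0$), so the pigeonhole remark that one side always contains at most one of the three cone points is precisely what forces the contradiction. Everything else --- that $\gamma$ separates $S$, that the geodesic curvature vanishes along $\gamma$, and the sign of $K$ --- is routine, and the argument treats the flat and hyperbolic cases uniformly.
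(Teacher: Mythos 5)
Your argument is correct. Both you and the paper rule out $\gamma$ via Gauss--Bonnet after observing that a simple closed curve on the sphere separates, but the execution is genuinely different. You invoke the Gauss--Bonnet formula \emph{with cone-angle corrections} and apply it directly to whichever of the two complementary disks contains at most one cone point (the pigeonhole step), getting $\int K = 2\pi$ or $\int K = 2\pi/p > 0$ against $K \le 0$; the case of a disk with two cone points never needs to be examined because you always pass to the other side. The paper instead stays entirely in the smooth locus: it excises small disks $D_i$ around the cone points whose boundaries are concave toward the complement, so that $\gamma$ lives in a pair of pants $\tilde S$, and then splits into two cases --- $\gamma$ bounds a disk in $\tilde S$ (Gauss--Bonnet on the disk gives $2\pi \le 0$), or $\gamma$ is parallel to a boundary circle of $\tilde S$ (Gauss--Bonnet on the cobounded annulus gives $0 < 0$ using the strict concavity of $\partial D_i$). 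Your route is shorter and avoids both the auxiliary concave disks and the implicit classification of simple closed curves in a pair of pants, at the cost of assuming the singular Gauss--Bonnet formula; the paper's route uses only the classical smooth Gauss--Bonnet theorem. One small remark: your consistency check of the sign convention on the closed flat surface ($\sum 1/p_j = 1$) is a nice touch, and the hyperbolic case checks out the same way, so the two cases are indeed handled uniformly as you claim.
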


\begin{proof}
We argue by contradiction and suppose there is such a geodesic $\gamma$. Consider disks $D_i$ around the cone points $x_i$, $i = 1, 2, 3$, such that $\gamma \subset \tilde{S} = S\backslash(\cup_{i} D_i)$. We can choose $D_i$ as the quotient of a geodesic disc $\tilde{D}_i$ in $\rp$ or $\hp$ by a rotation of angle $2\pi/p_i$, for some integer $p_i > 1$, where the center of this disc projects on the cone point. The boundary $\rd D_i$ is the projection of $\rd\tilde{D}_i$, thus the boundary components of $\tilde{S}$ have negative geodesic curvature with respect to the unit normal vector pointing towards $\tilde{S}$. 

Since the underlying surface is a sphere, the curve $\gamma$ separates $S$. Suppose $\gamma$ is the boundary of a disc $U$ on $\tilde{S}$. Then, by the Gauss-Bonnet formula and the hypothesis on the curvature of the orbifold we have 
\begin{equation}
2\pi = \int_{U} K_{S} \ dA + \int_{\gamma} \kappa_{g} \ dL = \left\{
\begin{array}{rl}
0, & \text{if } S \ \textrm{is flat},\\\\
-|U|, & \text{if } S \ \textrm{is hyperbolic};
\end{array} \right.
\end{equation}
which is a contradiction.

Thus, $\gamma$ must be homotopic to a the boundary components $\tilde{\gamma}$ of $\tilde{S}$. Let $\mathcal{A}$ be the annulus in $\tilde S$ bounded by $\gamma$ and $\tilde{\gamma}$. Since $\tilde\gamma$ has negative geodesic curvature we obtain
\begin{equation}
0 = \int_{\mathcal{A}} K_{S} \ dA + \int_{\gamma} \kappa_{g} + \int_{\tilde\gamma} \kappa_{g} \ dL < 0,
\end{equation}
which is again a contradiction.
\end{proof}

Let us recall some definitions concerning $3$-manifolds.
\begin{defin}
$1)$ We say that a $3$-manifold $M$ is irreducible if every $2$-sphere embedded in $M$ bounds an embedded $3$-ball in $M$. $M$ is $P^2$-irreducible if it is irreducible and contains no embedded two-sided projective plane.\\\\
$2)$ We say that a closed oriented $3$-manifold $M$ is Haken if it is irreducible and contains an embedded, two-sided, incompressible surface $\si$ (i.e., the map $\pi_1(\si) \to \pi_1(M)$ induced by the embedding is one-to-one) of genus $n \geq 1$. Otherwise, we say $M$ is non-Haken.
\end{defin}

\begin{thm}\label{coex}
Let $M$ be a closed, irreducible, non-Haken Seifert Fibered Space with infinite fundamental group, and let $h$ be a geometric metric on $M$. Then $(M,h)$ does not contain any closed, embedded, minimal surface whose two-sided cover is stable.
\end{thm}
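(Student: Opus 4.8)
I will argue by contradiction. Suppose $\si\subset M$ is a closed embedded minimal surface whose two-sided cover is stable. If $\si$ is two-sided it is itself stable, and Lemma~\ref{PR} shows that it is vertical or horizontal. If $\si$ is one-sided, its two-sided cover $\hat\si$ is a closed, two-sided, stable minimal surface immersed in $M$, and the analysis in the proof of Lemma~\ref{PR} applies to it: the function $\phi=\langle N_{\hat\si},\xi\rangle$ satisfies $L\phi=0$, so by stability $\phi$ has a fixed sign or vanishes identically. But the nontrivial deck transformation of $\hat\si\to\si$ is a fixed-point-free involution that reverses the normal and hence negates $\phi$, so a sign-definite $\phi$ is impossible; therefore $\phi\equiv0$, which means $\si$ is vertical. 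Thus in every case $\si$ is vertical or horizontal. As a preliminary, a base orbifold with $\chi(S)>0$ would, by the table in Section~\ref{SFS}, force the geometry of $M$ to be $\st$ (finite fundamental group) or $\sd\times\re$ (whose manifolds are not irreducible); both are excluded by hypothesis, so $\chi(S)\le 0$ and $S$ carries a flat or hyperbolic metric.

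Suppose first that $\si$ is horizontal. Then $\si$ is a degree-$d$ covering of the base orbifold, so $\chi(\si)=d\,\chi(S)\le 0$ and $\si$ has genus at least $1$. Since horizontal surfaces in a Seifert fibered space are incompressible, $M$ would then be Haken, contradicting the hypothesis. Hence $\si$ must be vertical.

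So write $\si=\Pi^{-1}(c)$, where $c$ is a simple closed curve contained in the regular part of $S$. Because $\Pi$ is a Riemannian submersion whose fibers are geodesics (the integral curves of the unit Killing field $\xi$), the mean curvature of $\Pi^{-1}(c)$ equals, up to normalization, the geodesic curvature of $c$; thus minimality of $\si$ is equivalent to $c$ being a closed geodesic of $S$. I then distinguish two subcases. If $c$ is essential in the orbifold, the vertical torus $\si$ is incompressible, so $M$ is Haken, a contradiction; moreover, if $\si$ is a one-sided vertical Klein bottle its projection $c$ is orientation-reversing and hence automatically essential, and the boundary of a regular neighbourhood of $\si$ is an incompressible torus, again forcing $M$ to be Haken. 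If instead $c$ is inessential, then $c$ bounds a disc containing at most one cone point, and the two Gauss--Bonnet computations in the proof of Proposition~\ref{geod} --- one over such a disc, one over the annulus bounded by $c$ and a loop around a single cone point --- show that a geodesic cannot bound such a region. Every subcase produces a contradiction, which completes the argument. When $M$ is irreducible, non-Haken and has infinite fundamental group, the base is in fact a sphere with exactly three cone points carrying a flat or hyperbolic metric, so this final step is precisely Proposition~\ref{geod}.

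I expect the vertical case to be the main obstacle, for two reasons. First, identifying minimality of $\Pi^{-1}(c)$ with the geodesic condition on $c$ rests on the submersion structure of the geometric metric and on the fibers being geodesics. Second, excluding the geodesic $c$ requires combining the topological input (non-Hakenness, entering through the incompressibility of vertical tori over essential curves) with the metric input (the flat or hyperbolic structure on $S$, entering through the Gauss--Bonnet arguments of Proposition~\ref{geod} for inessential curves). The remaining technical point is the bookkeeping for one-sided surfaces and non-orientable base orbifolds --- that is, vertical Klein bottles --- which is handled by passing to regular neighbourhoods as indicated.
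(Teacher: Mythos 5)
Your proposal is correct and follows the same skeleton as the paper (Lemma~\ref{PR} to reduce to vertical or horizontal, minimality of a vertical surface forcing its projection to be a geodesic, and a Gauss--Bonnet contradiction), but two of your sub-arguments take a genuinely different route. For the horizontal case the paper simply observes that manifolds with $\nil$ or $\psl$ geometry have $e(M)\neq 0$ and hence contain no horizontal surfaces at all, whereas you rule them out via incompressibility; your route works, but the blanket claim that horizontal surfaces are incompressible needs the qualifier $\chi(\si)\le 0$ (which you do establish first), and strictly speaking you should also dispose of a possibly one-sided horizontal surface by passing to the boundary of its regular neighbourhood before invoking Hakenness. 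For the one-sided case the paper lifts to a double cover of $M$ and reapplies Lemma~\ref{PR} there, whereas you run the Jacobi-field argument directly on the immersed two-sided cover and use the normal-reversing deck transformation to kill the sign-definite alternative; this is cleaner and in fact proves the stronger statement that a one-sided stable surface must be vertical (and your vertical Klein bottle subcase is vacuous here, since the base orbifold is an orientable sphere and admits no orientation-reversing simple closed curves). For the vertical case your essential/inessential dichotomy is more self-contained than the paper's, which invokes Heil's result that the base is a sphere with three cone points and then applies Proposition~\ref{geod} outright; your version avoids needing that classification except to observe, as you do, that it collapses the dichotomy to Proposition~\ref{geod}. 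Two small inaccuracies worth fixing: the parenthetical that all $\sd\times\re$-manifolds are reducible fails for $\mathbb{RP}^2\times S^1$ (though that manifold is non-orientable and so excluded by the paper's convention that Seifert fibered spaces are oriented), and the claim that an incompressible vertical torus over an essential curve makes $M$ Haken uses irreducibility of $M$, which you should cite explicitly from the hypotheses.
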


\begin{proof}
It is well-known that a closed, irreducible, non-Haken Seifert Fibered Space $M$ with infinite fundamental group has $e(M) \neq 0$ and the base orbifold $S$ is a sphere with three cone points (see for example Proposition $2$ in \cite{H}). The possible geometric structures for $M$ are $\nil$ and $\psl$, and for the geometric metrics on $M$, $S$ has a flat or hyperbolic metric. So, $M$ belong to the class of manifolds discussed in the Main example \ref{main}.

We argue by contradiction and suppose that there is $\si \subset M$, a closed embedded minimal surface whose two-sided covering is stable. In the case $\si$ is two-sided, by Lemma \ref{PR}, $\si$ is either vertical or horizontal. If $\si$ is vertical then its projection $\gamma$ on the base surface is a simple closed curve in the regular part of $S$. We next prove that $\gamma$ is a geodesic.

Let $T$ and $\eta$ be respectively a unit tangent field and a unit normal field to $\gamma$. Given a point $x \in \si$, consider an small neighborhood $U$ of $\Pi(x)$ in $S$ such that the projection $\Pi: \Pi^{-1}(U) \to U$ is a Riemannian submersion. Let $\widetilde{T}$ and $\widetilde{\eta}$ be the corresponding horizontal lifts to $\Pi^{-1}(U)$ of $T$ and $\eta$. Then, $\{\widetilde{T},\xi\}$ is an orthonormal basis on $T_{x}\si$ and $\widetilde{\eta}$ a unit normal to $\si$, where $\xi$ is the unit Killing vertical vector field of $\Pi^{-1}(U)$. Also, let $\overline{\nabla}$ and $\nabla$ denote the connections on $M$ and $S$ respectively. Then, the mean curvature of $\Sigma$ at x is
\begin{align*}
0&=-\left\langle\overline{\nabla}_{\widetilde{T}}\widetilde{\eta},\widetilde{T}\right\rangle-\left\langle \overline{\nabla}_{\xi}\widetilde{\eta},\xi\right\rangle =\left\langle \overline{\nabla}_{\widetilde T} \widetilde T,\widetilde{\eta}\right\rangle=\left\langle \nabla_T T,\eta\right\rangle\\
&=\rm{the\:geodesic\;curvature\;of\;\gamma\;at\;the\;point\;\Pi(x)}.
\end{align*}

However, by Proposition \ref{geod}, $S$ does not admit simple closed geodesics in its regular part. On the other hand, as we mentioned in the Section \ref{SFS}, closed manifolds with the geometries of $\nil$ and $\psl$ do not contain horizontal surfaces. So, in any case we obtain a contradiction. 

Finally, if $\si$ is one-sided we can pass to a double covering $F: \widetilde{M} \to M$ so that the lift $\widetilde\si$ of $\si$ to $\widetilde{M}$ is a connected closed embedded two-sided minimal surface (see ~\cite[Proposition 3.7]{Z}), which is stable by hypothesis. The manifold $\widetilde{M}$ is also a Seifert Fibered space, endowed with the geometric metric $\widetilde{h} = F^{*}h$, so again by Lemma \ref{PR}, $\widetilde\si$ is either vertical or horizontal, thus this also holds for $\si$, and the contradiction follows as before.
\end{proof}

\section{Further discussion}\label{last}

In this section, we prove Theorem \ref{ims} and motivate the question we presented in the Introduction.

\begin{proof}[Proof of Theorem \ref{ims}]
Suppose $M$ is not a quotient of the $3$-sphere. We will prove that $(M,g)$ contains a two-sided, immersed, stable minimal surface. In this case, it follows from the work of Perelman that $M$ has infinite fundamental group, thus there are four possibilities:\\

\noindent
{\bf 1) $M$ is not irreducible}: Then, there is an embedded sphere $S \subset M$, which represents a non zero element of $\pi_{2}(M)$. So, combining the results in \cite{S.U1} and \cite{M.Y}, there is an immersed stable minimal sphere $\si$ in $(M,g)$, and either $\si$ is embedded or it double covers an embedded projective plane.

\noindent
{\bf 2) $M$ is orientable, irreducible and Haken}: Combining the results in \cite{S.U2,S.Y} with that of \cite{F.H.S}, $(M,g)$ admits an incompressible, orientable, stable minimal surface of genus $n \geq 1$. Moreover, this surface is either embedded or it double covers an embedded non-orientable surface. 

\noindent
{\bf 3) $M$ is orientable, irreducible and non-Haken}: We have two sub-cases. If $\pi_1(M)$ contains $\mathbb{Z}\oplus\mathbb{Z}$ as a subgroup, then there is an immersed incompressible torus in $M$. Thus it follows from the results in \cite{S.U2,S.Y} that $(M,g)$ contains an immersed stable minimal torus; if $\pi_1(M)$ does not contains any subgroup isomorphic to $\mathbb{Z}\oplus\mathbb{Z}$, then by the work of Perelman, $M$ admits a hyperbolic metric (which can be different from the metric $g$). It follows from the work of Kahn-Markovic \cite{K.M} that $M$ admits an orientable, incompressible immersed surface of genus $n \geq 2$. So using again \cite{S.U2,S.Y} we conclude that $(M,g)$ contains an immersed stable minimal surface of genus $n \geq 2$.

\noindent
{\bf 4) $M$ is non-orientable and irreducible}: If $M$ contains an embedded $2$-sided projective plane $P$, then by ~\cite[Proposition $2.3$]{BBEN}, $(M,g)$ contains a embedded stable minimal surface $\si$ homeomorphic to $P$. Moreover $\si$ is $2$-sided (otherwise, by ~\cite[Lemma 2]{H2} $M$ is $\mathbb{R}P^3$, which contradicts the fact that $M$ is non-orientable). If $M$ is $P^{2}$-irreducible, by Lemmas $6.6$ and $6.7$ in \cite{He}, $M$ contains an embedded, $2$-sided nonseparating incompressible surface $S$. Using Theorems $3.1$ and $5.1$ of \cite{F.H.S} we obtain a stable minimal surface $\si$ homotopic to $S$ in $(M,g)$, moreover, $\si$ is embedded or double covers an embedded one-sided surface.
\end{proof}

Analysing the previous proof, we see that if $M$ is either non-orientable, or not irreducible, or irreducible and Haken, then $(M,g)$ contains an embedded minimal surface whose two-sided covering is stable. If $M$ is irreducible, non-Haken and $\pi_1(M)$ contains $\mathbb{Z}\oplus\mathbb{Z}$ as a subgroup, it follows from the {\it Seifert Fibered Space theorem} (see Section $2.3.2$ in \cite{Pr}) that $M$ is a non-Haken Seifert Fibered Space with infinite $\pi_1$, and as we saw in the Theorem \ref{coex}, these spaces admit a metric which does not contain closed, embedded minimal surfaces whose two-sided cover is stable. So the only case remaining to study is that when $M$ is a non-Haken hyperbolic manifold. There are examples of non-Haken hyperbolic manifolds which do not contain any non-orientable embedded surface, so in this case the surfaces produced by the last theorem are not embedded and do not cover any non-orientable surface.\\

\noindent
{\bf Question}: Let $M$ be a closed, non-Haken hyperbolic $3$-manifold and let $g$ be an arbitrary Riemannian metric on $M$. Does $(M,g)$ admit a closed, embedded, minimal surface whose two-sided covering is stable?\\

It is important to highlight that this question is open even when the metric $g$ is the hyperbolic metric.


\begin{thebibliography}{20}

\bibitem{BBEN} H. Bray, S. Brendle, M. Eichmair, A. Neves. {\it Area-minimizing projective planes in 3-manifolds}. Comm. Pure Appl. Math. 63 (2010), no. 9, 1237-1247


\bibitem{F.H.S} M. Freedman, J. Hass, P. Scott. {\it Least area incompressible surfaces in 3-manifolds}. Invent. Math. 71 (1983), no. 3, 609-642.




\bibitem{H} W. Heil. {\it Almost sufficiently large Seifert fiber spaces}. Michigan Math. J. 20 (1973), 217-223.

\bibitem{H2} W. Heil. {\it Testing 3-manifolds for projective planes}.
Pacific J. Math. 44 (1973), 139-145. 

\bibitem{He} J. Hempel. {\it 3-Manifolds}. Ann. of Math. Studies, No. 86. Princeton University Press, Princeton, N. J.; University of Tokyo Press, Tokyo, 1976. xii+195 pp.


\bibitem{K.M} J. Kahn, V. Markovic. {\it Immersing almost geodesic surfaces in a closed hyperbolic three manifold}. Ann. of Math. (2) 175 (2012), no. 3, 1127-1190.




\bibitem{Ma} B. Martelli. {\it An Introduction to Geometric Topology}. CreateSpace Independent Publishing Platform, 488 pages (2016)


\bibitem{M.P.R} W. Meeks III, J. P\'erez, A. Ros. {\it Stable constant mean curvature surfaces}. Handbook of geometric analysis. No. 1, 301-380, Adv. Lect. Math. (ALM), 7, Int. Press, Somerville, MA, 2008. 

\bibitem{M.Y} W. Meeks III, S.T. Yau. {\it Topology of three-dimensional manifolds and the embedding problems in minimal surface theory}. Ann. of Math. (2) 112 (1980), no. 3, 441-484

\bibitem{MSY} W.H. Meeks III, L. Simon and S.T. Yau. {\it Embedded minimal surfaces, exotic spheres, and manifolds with positive Ricci curvature}. Ann. of Math. 116 (1982),  621-659.

\bibitem{Pe} G. Perelman. {\it The entropy formula for the Ricci flow and its geometric applications}. arXiv:math/0211159

\bibitem{P.R} J. Pitts, H. Rubinstein. {\it The topology of minimal surfaces in Seifert fiber spaces}. Michigan Math. J. 42 (1995), no. 3, 525-535.

\bibitem{Pr} J-P. Pr\'eaux. {\it A Survey on Seifert Fiber Space Theorem}.  ISRN Geom. 2014, Art. ID 694106, 9 pp.

\bibitem{S.U1} J. Sacks, K. Uhlenbeck. {\it The existence of minimal immersions of 2-spheres}. Ann. of Math. (2) 113 (1981), no. 1, 1-24.

\bibitem{S.U2} J. Sacks, K. Uhlenbeck. {\it Minimal immersions of closed Riemann surfaces}. Trans. Amer. Math. Soc. 271 (1982), no. 2, 639-652.

\bibitem{S.Y} R. Schoen, S.T. Yau. {\it Existence of incompressible minimal surfaces and the topology of three-dimensional manifolds with nonnegative scalar curvature}. Ann. of Math. (2) 110 (1979), no. 1, 127-142

\bibitem{S} P. Scott. {\it The Geometries of 3-manifolds}. Bull. London Math. Soc. 15 (1983), no. 5, 401-487.




\bibitem{Z} X. Zhou. {\it Min-max minimal hypersurface in $(M^{n+1},g)$ with $\ric >0$ and $2 \leq n \leq 6$}. J. Differential Geom. 100 (2015), no. 1, 129-160

\end{thebibliography}
\end{document}